\newtheorem{thm}{Theorem}
\newtheorem{lem}{Lemma}
\newtheorem{pro}{Proposition}
\theoremstyle{definition} 
\newtheorem{rem}{Remark}
\def\section{\@startsection{section}{1}%
  \z@{1.5\linespacing\@plus\linespacing}{.5\linespacing}%
  {\normalfont\bfseries\large\centering}}
\def\R{\mathbb R}
\def\C{\mathbb C}
\def\Z{\mathbb Z}
\def\N{\mathbb N}
\def\var{\varepsilon}
\def\lap{\Delta}
\def\be{\begin{equation*}}
\def\ee{\end{equation*}}
\def\beq{\begin{equation}}
\def\eeq{\end{equation}}
\begin{document}

\title[A lower bound on the blow up rate for DS on the torus]{A lower bound on the blow up rate for the Davey-Stewartson system on the torus}
\author[Nicolas Godet]{Nicolas Godet}
\address{University of Cergy-Pontoise, Department of Mathematics, CNRS, UMR 8088, F-95000 Cergy-Pontoise}
\email{nicolas.godet@u-cergy.fr}

\begin{abstract}
We consider the hyperbolic-elliptic version of the Davey-Stewartson system with cubic nonlinearity posed on the two dimensional torus. A natural setting for studying blow up solutions for this equation takes place in $H^s, 1/2 <s <1$. In this paper, we prove a lower bound on the blow up rate for these regularities.
\end{abstract}

\maketitle

\section{Introduction}

We consider the Davey-Stewartson system defined on the two dimensional torus $T^2:= \R ^2  / 2 \pi \Z ^2 $:
\begin{equation} \label{ds1}
\left \{
\begin{array}{rcl}
i \partial_t u - \partial_x^2 u + \partial_y^2 u &=&  - |u|^2u +  2u \partial_x \phi, \\
  ( \partial_x^2 + \partial_y^2 ) \phi &= & \partial_x |u|^2,
\end{array}
\right.
\end{equation}
where $u: \R \times T^2 \to \C$ and $\phi : \R \times T^2 \to \R$ are the unknowns. Rearranging the second equation, we may see this system as a dispersive equation with an hyperbolic linear part and a nonlocal nonlinearity:
\begin{equation} \label{ds2}
 i \partial_t u + P u =-|u|^2 u - E(|u|^2) u, \qquad (t,x) \in \R \times T^2,
\end{equation}
where $P=- \partial_x^2  + \partial_y ^2$ and $E$ is the nonlocal operator such that:
\begin{eqnarray*}
 \widehat{E(f)}(m,n) &=& \frac{2 m^2}{m^2+n^2} \widehat{f}(m,n), \qquad (m,n) \in \Z^2 \setminus \{ (0,0 )\}, \\
   \widehat{E(f)}(0,0) & =& 0.
\end{eqnarray*}
The Cauchy problem and the blow up theory for this equation have been studied essentially in the case where the system is posed in $\R^2$. In this case, the system is locally well posed in the Sobolev spaces $L^2, H^1$ (see \cite{GhiSau1990}) and more easily for higher regularities $H^s,  s>1$. In \cite{Oza1992}, T. Ozawa proved that the equation posed on $\R^2$ enjoys a pseudo-conformal type symmetry and as for NLS, this allows to construct a blow-up solution by applying this transformation to an explicit stationary (periodic in time for NLS) solution:
\[
 u(t,x,y)=\frac{1}{1+ x^2+y^2}.
\]
This function is then transformed into:
\begin{equation} \label{solutionexplo}
 v(t,x,y)  =  \frac{1}{a+bt} \mathrm{exp}\left (\frac{i b}{4 (a+bt)} (-x^2 +y^2) \right ) \frac{1}{ 1+  \left( \frac{x}{a+bt}\right )^2+\left ( \frac{y}{a+bt} \right)^2},
\end{equation}
with $(a,b) \in \R^2$. Note that $v(t)$ is in $H^s(\R^2)$ (see \cite{Oza1992}) for every $s<1$ with  $\|v(t)\|_{L^2} =  \sqrt{\pi}$ but is not in $H^1(\R^2)$. The solution $v$ blows up at time $T=-a/b$ in $L^2$ in the sense that the $L^2$ blow-up criteria (the $L^4_{[0,t]}L^4$ norm goes to infinity when $t$ goes to $T$) is satisfied:
\[
\| v \|_{L^4([0,t]) L^4(\R^2)} \sim \frac{C}{(T-t)^{1/4}} \to \infty \textrm{ as } t \textrm{ goes to } T,
\]
and accumulates all the mass in the origin:
\[
 |v(t)|^2 \to \pi \delta_{(0,0)} \quad \textrm{ as } t \to T \textrm{ in } \mathcal D'(\R^2).
\]
The explosion also occurs in $H^s, s <1$ with the pseudo-conformal bound (\cite{Oza1992}):
\[
 \| v(t) \|_{H^s} \sim \frac{C}{(T-t)^s}.
\]
Note that in $\R^2$, we have a scaling symmetry; if $u$ solves (\ref{ds2}) then for all $\lambda >0, (t,x,y) \mapsto \lambda u(t, \lambda ^2 t, \lambda x, \lambda y)$ also solves (\ref{ds2}). It is a classical fact that this symmetry automatically implies a lower bound on the blow up rate: for all blow up solution $u$ with maximal time $T(u)< \infty$, we have:
\begin{equation} \label{lower}
 \| u(t)\|_{H^s} \geq \frac{C}{ (T(u)-t)^{s/2}}.
\end{equation}

\medskip

By analogy with NLS in $\R^2$, we may ask the question of existence of ground states of the type $u(t,x,y)= \mathrm {exp}(i \omega t) Q(x,y)$ for (\ref{ds1}) posed on $\R^2$ with an exponentially decaying profile $Q$ but the hyperbolicity of the operator $- \partial_x^2+ \partial_y ^2$ forbids the existence of such solutions at least in the case where the nonlinearity is $-|u|^2u$ \cite{GhiSau1996}. Moreover, numerics \cite{KleMuiRoi2011} seems to show that the $L^2$-norm of the solution $u$ (or $v$) is the minimal mass for which we may have singularities in finite time. Thus, the function $u$ plays the role of a ground state but is only polynomially decaying and this requires to work with low regularities $H^s$, $s<1$.

\medskip

 The aim of this paper is twofold: first give an $H^s$ framework for studying blow up theory for (\ref{ds2}) i.e. show the local well-posedness of (\ref{ds2}) for initial data in $H^s(T^2)$ $s<1$ and secondly show that the lower bound on the blow up rate (\ref{lower}) still holds even if a scaling symmetry does not strictly make sense on the torus. The proof relies on local existence arguments on the dilated torus $\R^2 / 2 \pi L \Z^2$, $L \to \infty$ and more precisely on bilinear Strichartz estimates. The classical method \cite{BurGerTzv2005} giving well posedness from bilinear Strichartz estimates does not work in our setting because of the non local term; we will have to refine the bilinear approach with more general localizations. An interesting question, not solved here, is the localization of the solution $v$ (\ref{solutionexplo}) i.e. construct from $v$ a solution of (\ref{ds2}). The non exponential decay of $v$ is reflected in the estimate
\[
 \| v(t) \|_{H^s(  \var \leq | (x,y)| \leq A )} \leq  C (T-t)^{(1-s)},
\]
which make perturbation arguments around $v$ difficult to apply even on a compact domain. In particular, for $s>1$, blow up is not localized and this explains our choice to treat low regularities.

\begin{rem}
The system we study is called the hyperbolic-elliptic version of the generalized Davey-Stewartson system:
\begin{equation} 
\left \{
\begin{array}{rcl}
i \partial_t u + \var_1 \partial_x^2 u + \partial_y^2 u &=&  - |u|^2u +  2u \partial_x \phi, \\
  ( \var_2 \partial_x^2 + \partial_y^2 ) \phi &= & \partial_x |u|^2,
\end{array}
\right.
\end{equation}
where $\var_i \in \{-1,+1\}$. Depending on the values of $\var_i$, the local well-posedness holds \cite{GhiSau1990}, \cite{Bar2011}, \cite{LinPon1993}, \cite{Hay1997}, \cite{Chi1999} but blow up theory is really well understood only in the elliptic-elliptic case $\var_1=\var_2=1$ \cite{LiGuoJia2000}, \cite{ZhaZhu2011} \cite{Ric2011} where results are similar to those for NLS. 
\end{rem}

\section{Statement of the result and remarks}

Let us now give our result.

\begin{thm} \label{newthm} 
Let $s >1/2$. 

\smallskip

\noindent 1) The equation (\ref{ds2}) is locally well-posed in the space $H^s(T^2)$ in the following sense. There exists $b>1/2$ such that the following holds true: for all $u_0 \in H^s(T^2)$, there exist a time $T>0$ and a unique $u \in X^{s,b}_T(T^2) \subset \mathcal C([0,T], H^s(T^2))$ satisfying $u(0)=u_0$ and (\ref{ds2}). Here, $X^{s,b}_T$ denotes the Bourgain space associated to (\ref{ds2}) and defined in (\ref{defibourgain}).

\medskip

\noindent 2) Let $u_0 \in H^s(T^2)$ and $u$ the corresponding solution. If $T(u_0)$ denotes the maximal time of existence of $u$, then we have the following possibilities: either $T(u_0) =+ \infty$ or $T(u_0)< + \infty$ and in this case, there exists $C>0$ such that for all $t \in [0, T(u_0))$:
\begin{equation} \label{lower_bound}
 \|u(t) \|_{H^s(T^2)} \geq \frac{C}{(T(u_0)-t)^{s/2}}.
\end{equation} 
\end{thm}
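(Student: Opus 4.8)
The plan is to combine a Bourgain-space local well-posedness theory for \eqref{ds2}, carried out not only on $T^2$ but uniformly over the family of dilated tori $T^2_L := \R^2/2\pi L\Z^2$ with $L\ge 1$, with a scaling argument that compensates for the fact that no exact scaling symmetry survives on a fixed torus.

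For part 1) I would introduce the Bourgain space $\xsb$ associated to the free flow of \eqref{ds2} (i.e. to $i\pa_t + P$, whose symbol is $m^2-n^2$) and, after the standard linear and Duhamel estimates valid for $b>1/2$, reduce the Cauchy problem to a trilinear estimate of the form
\be
\| u\bar v w\|_{X^{s,b-1}_T} + \| E(u\bar v)\,w\|_{X^{s,b-1}_T} \;\lesssim\; T^{\theta}\,\| u\|_{\xsb}\,\| v\|_{\xsb}\,\| w\|_{\xsb}
\ee
for suitable $b>1/2$ and $\theta>0$, from which a contraction argument produces the unique solution in $\xsb\subset\mathcal C([0,T],H^s)$. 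The purely local term $u\bar v w$ is handled by the periodic bilinear Strichartz estimate for $e^{itP}$, whose proof amounts to counting lattice points in regions cut out by the symbol $m^2-n^2$, runs as in the elliptic case, and, crucially, can be carried out with a constant uniform in $L\ge 1$. The genuinely new difficulty, where the classical scheme of \cite{BurGerTzv2005} breaks down, is the nonlocal term: the multiplier $2m^2/(m^2+n^2)$ is bounded but does not respect dyadic (Littlewood--Paley) localizations, so the usual frequency bookkeeping fails and one must re-run the bilinear estimates with more flexible, anisotropic frequency decompositions adapted to $E$. What I want to extract from this step is a quantitative statement uniform in $L$: for every $M>0$ there is $c(M)>0$ such that, on any $T^2_L$ with $L\ge1$, every datum of $H^s$-size $\le M$ generates a solution of \eqref{ds2} on $[0,c(M)]$.

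For part 2) I would first record two elementary facts: testing \eqref{ds2} against $\bar u$ and taking imaginary parts shows $\|u(t)\|_{L^2}$ is conserved; and the transformation $u(t,x,y)\mapsto \lambda\,u(\lambda^2 t,\lambda x,\lambda y)$ maps solutions of \eqref{ds2} on $T^2_L$ to solutions on $T^2_{L/\lambda}$, leaving the $L^2$-norm unchanged and multiplying the $\dot H^s$-norm by $\lambda^s$. Now let $u_0\in H^s(T^2)$ with $R:=\|u_0\|_{H^s}\ge1$; set $\lambda=R^{-1/s}\le1$ and $v_0(x,y)=\lambda\,u_0(\lambda x,\lambda y)$, a function on $T^2_{1/\lambda}$ with $\|v_0\|_{\dot H^s}=\lambda^s\|u_0\|_{\dot H^s}\le1$ and $\|v_0\|_{L^2}=\|u_0\|_{L^2}$. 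Since $\|u_0\|_{L^2}$ is a fixed constant along the evolution, $\|v_0\|_{H^s(T^2_{1/\lambda})}\le M$ for an $M$ depending only on $\|u_0\|_{L^2}$; applying the step-1 estimate on $T^2_{1/\lambda}$ (note $1/\lambda\ge1$) and undoing the scaling, the solution of \eqref{ds2} on $T^2$ with data $u_0$ exists at least on $[0,\lambda^2 c(M)]$. Hence the maximal existence time obeys $T(u_0)\ge c(M)\,\|u_0\|_{H^s}^{-2/s}$ whenever $\|u_0\|_{H^s}\ge1$.

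The bound \eqref{lower_bound} then follows by the usual continuation argument. If $T:=T(u_0)<\infty$ then $\|u(t)\|_{H^s}\to\infty$ as $t\to T$ (otherwise $u$ would extend past $T$ by step 1), so for $t$ close enough to $T$ one has $\|u(t)\|_{H^s}\ge1$; restarting the local theory at time $t$ from the datum $u(t)$, whose $L^2$-norm is still the same constant, yields a solution living for a time at least $c(M)\,\|u(t)\|_{H^s}^{-2/s}$, and by uniqueness this interval is contained in $[t,T)$, so $c(M)\,\|u(t)\|_{H^s}^{-2/s}\le T-t$, i.e. $\|u(t)\|_{H^s}\ge c(M)^{s/2}(T-t)^{-s/2}$. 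For the remaining $t$, bounded away from $T$, the inequality is absorbed into the constant $C$ by continuity of $t\mapsto\|u(t)\|_{H^s}$ on compact subintervals. I expect the whole difficulty to lie in step 1: establishing the local theory on $T^2_L$ with an existence time depending only on the $H^s$-size of the data and not on $L$, which forces every bilinear Strichartz and $\xsb$ estimate to carry a constant uniform in $L\ge1$, and in particular handling the nonlocal multiplier $E$, which is exactly where the argument must leave the classical bilinear-Strichartz route.
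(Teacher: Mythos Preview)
Your proposal is correct and follows essentially the same approach as the paper: a uniform-in-$L$ local theory on the dilated tori $T^2_L$ via bilinear Strichartz estimates in Bourgain spaces (with extra care for the multiplier $E$, which the paper handles through finer cube decompositions and almost-orthogonality rather than the standard dyadic scheme), combined with the scaling $u\mapsto \lambda u(\lambda^2 t,\lambda x,\lambda y)$ and $L^2$-conservation to normalize the rescaled datum and extract the lower bound on the lifespan. The paper's Step~2 is exactly your restart-at-time-$t$ argument, written more tersely.
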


Before giving the proof of Theorem~\ref{newthm}, let us give some comments. Consider the equation
\begin{equation} \label{nlshyper}
 i \partial_t u + Pu= -|u|^2 u, \qquad t>0, \quad (x,y) \in T^2.
\end{equation}
Strichartz type estimates hold for the operator $P$ (see \cite{GodTzv2012}, \cite{Wan2012}) and this with an analysis similar to \cite{BurGerTzv2005} gives the local well-posedness of (\ref{nlshyper}) in $H^s(T^2)$ for all $s>1/2$. Following \cite{GodTzv2012}, it is easy to check that the function defined by
\begin{equation} \label{dila}
 u(t,x,y)= e^{it | u_0 (x+y)|^2 } u_0(x+y)
\end{equation}
is a solution of (\ref{nlshyper}) for all $u_0 \in \mathcal C^{\infty} (\R / 2 \pi \Z, \R)$. If $u_0$ is not a constant function and if $s \geq 0$, we can check that there exists $C>0$ such that for $t \geq 1$,
\begin{equation} 
 \| u(t)\|_{H^s} \geq Ct^s.
\end{equation}
Thus, for $s>1/2$, we obtain an explicit solution which blows up in infinite time; this contrasts with the usual Schr\"odinger equation. Using a suitable rescaling of the explicit solution (\ref{dila}), one may also show the local ill-posedness of (\ref{nlshyper}) in $H^s, s<1/2$ (see the appendix of \cite{BurGerTzv2005a} for a similar discussion). It would be interesting to know if we may construct solutions behaving like (\ref{dila}) for the equation (\ref{ds2}).

\section{Proof of the result}

\textbf{Strategy of the proof.} To prove Theorem~\ref{newthm}, the idea is to rescale the torus $T^2=\R^2 / 2 \pi \Z^2$ by considering $T^2_L=\R^2 /  2 \pi L \Z^2$ where $L>0$ will tend to infinity. In a first step, we perform a Banach fixed point argument in the dilated Bourgain space $X^{s,b}_{T, L}$ to obtain a local well-posedness result in $H^s(T_L^2)$ with the bound on the blow up time:
\begin{equation} \label{time}
 T(u_0) \geq F( \|u_0\|_{H^s(T_L^2)}),
\end{equation}
for some function $F$ independent of $L$. This relies on a uniform bilinear Strichartz estimate. In our analysis, it is of importance that dispersive estimates are local in space and time. In particular, if we take $L=1$, this step will give the first point of Theorem~\ref{newthm}. In the step 2, we deduce the blow up lower bound from a scaling argument. The bound (\ref{lower_bound}) which is the same as $\R ^2$ is in accordance with the fact that when $L$ goes to infinity $T^2_L$ looks like $\R^2$ formally. Note that the machinery of the Bourgain spaces is natural to study such questions but we do not exclude the possibility of working on other spaces by adapting harmonic analysis results to the case of $T_L^2$ to treat the operator $E$.

\medskip

\textbf{Notations.} We denote by $e_{m,n}(x,y) = (2 \pi)^{-1} \mathrm{exp}( i mx + i ny)$ the usual orthonormal basis of $L^2(T^2_1)$. When working on $T^2_L$, we will keep the same notation $e_{m,n}$ for the rescaled basis: $e_{m,n}(x,y) = (2 \pi L)^{-1} \mathrm{exp}( i (m/L)x + i (n/L)y)$. For a function $u$ defined on $T_L^2$, we note
\[
\Delta_Q (u) = \sum_{(m,n) \in Q} c(m,n) e_{m,n},
\]
where $c(m,n)$ are the Fourier coefficients of $u$:
\[
c(m,n) = \frac{1}{2 \pi L} \int_{T^2_L} u(x,y) e^{ i( \frac{m}{L}x + \frac{n}{L}y)} dx dy.
\]
If $Q  \subset \Z^2$ and $R$ is a dyadic number, we set for a function $u(t,x)$ defined on $\R \times T_L^2$: 
\be
\Delta_{Q,R} u =  \sum_{(m,n) \in Q} \left (  \int_{ R \leq \langle \tau - \frac{m^2}{L^2}+\frac{n^2}{L^2}  \rangle \leq 2 R } \widehat{c_{n,m}}(\tau) e^{2 i \pi  t \tau}  d \tau   \right ) e_{m,n},
\ee
where $c_{m,n}(t)$ are the Fourier coefficients of $u(t)$. When $Q$ is the cube $Q=\{ (m,n) \in \Z^2, N \leq   \mathrm{Max}( |m/L|, |n/L| ) \leq 2N \}$, we will note $\Delta_N = \Delta_Q$ and $\Delta_{N,R} = \Delta_{Q,R}$.

\medskip

\textbf{Step 1.} We prove: for all $s >1/2, L \geq 1$ and $u_0 \in H^s(T_L^2)$, there exists a solution $u$ of 
\begin{equation} \label{eq2}
 i \partial_t u + Pu= -|u|^2 u -E(|u|^2) u, \quad  (x,y) \in T^2_L,
\end{equation}
and $\alpha >0$, $C>0$ independent of $L$ satisfying the lower bound on the blow up time:
\begin{equation} \label{rate1}
T(u_0) \geq \frac{C}{ \|u_0\|_{H^s(T_L^2)}^{\alpha}}.
\end{equation}
The point here is that the lower bound depends only on the size of the initial data and not on $L$. On $T_L^2$, we denote (without changing notations) by $P$ and $E$ the natural extensions of the operators $P$ and $E$ defined above on $T^2$. Hence, symbols are respectively $(-m^2+n^2)/L^2$ and $2 m^2/(m^2+n^2)$. 

\medskip

\textbf{High regularity.} Before looking at low regularities and to convince the reader that \ref{rate1}) holds, let us focus on the easier case of more regular data i.e. $s \in \N \setminus \{0,1 \}$. Let us prove (\ref{rate1}) in this case. Let $L \geq 1$ and consider the equation (\ref{eq2}) and its equivalent formulation 
\begin{equation} \label{intformulation}
u(t) = e^{it P} u_0 + i \int_0^t e^{ i(t-\tau) P  }\left ( |u(\tau)|^2 u(\tau) +E( | u(\tau)|^2) u(\tau) \right ) d \tau.
\end{equation}
Let $u_0 \in H^s(T^2_L)$. Taking the $H^s$-norm in (\ref{intformulation}) and using the triangle inequality, we get for a constant $C>0$ independent of $L$:
\beq \label{esti1}
\|u(t)\|_{H^s} \leq \|u_0 \|_{H^s} + C \int_0^T \left ( \| u(\tau) ^2 u (\tau) \|_{ H^s} + \|E( | u(\tau)|^2) u(\tau) \|_{H^s} \right ) d \tau.
\eeq
Now we need a Sobolev type inequality with constants independent of the size of the torus. 
\begin{lem} \label{sobolevembedding}
Let $s$ be an integer with $s \geq 2$. There exists a constant $C>0$ such that for all $L>0$ and $v, w \in H^s(T_L)$, 
\[
\|vw\|_{H^s(T^2_L)} \leq C \| v\|_{H^s(T^2_L)} \|w\|_{H^s(T^2_L)}.
\]
\end{lem}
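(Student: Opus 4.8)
The plan is to reduce the statement to the standard Leibniz rule on $\Z^2$ combined with the Sobolev embedding $H^s\hookrightarrow L^\infty$, but to carry out every estimate on the Fourier side so that the constants depend only on $s$ and never on $L$. Write $v=\sum_{(m,n)} a_{m,n} e_{m,n}$ and $w=\sum_{(m,n)} b_{m,n} e_{m,n}$ on $T_L^2$. Since $e_{m,n}e_{m',n'} = (2\pi L)^{-1} e_{m+m',n+n'}$, the Fourier coefficients of the product $vw$ are $(2\pi L)^{-1}\sum_{(m',n')} a_{m',n'} b_{m-m',n-n'}$, exactly as on $T_1^2$ up to the harmless factor $(2\pi L)^{-1}$ coming from the normalization of $e_{m,n}$. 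Thus the combinatorics of the convolution is independent of $L$; the only $L$-dependence sits in the weights $\langle (m,n)/L\rangle^{2s}$ that define the $H^s(T_L^2)$ norm. Because $s$ is a nonnegative integer, one has the elementary inequality $\langle \xi+\eta\rangle^s \le C_s(\langle\xi\rangle^s + \langle\eta\rangle^s)$ for $\xi,\eta\in\R^2$, hence $\langle (m,n)/L\rangle^s \lesssim_s \langle (m',n')/L\rangle^s + \langle (m-m',n-n')/L\rangle^s$; this is the scaled analogue of the usual frequency-splitting and its constant depends only on $s$.

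Next I would insert this splitting into the $\ell^2$ norm of the Fourier coefficients of $vw$. After splitting, $\|vw\|_{H^s(T_L^2)}$ is controlled by two symmetric terms; consider the one where the weight lands on the $v$-frequency. That term is, up to the factor $(2\pi L)^{-1}$ and a constant $C_s$, the $\ell^2$ norm of $(m,n)\mapsto \sum_{(m',n')}\langle (m',n')/L\rangle^s |a_{m',n'}|\,|b_{m-m',n-n'}|$, i.e. the convolution of the sequence $\langle \cdot/L\rangle^s |a|$ with $|b|$. By Young's inequality $\|f*g\|_{\ell^2}\le \|f\|_{\ell^2}\|g\|_{\ell^1}$, this is bounded by $\|v\|_{H^s(T_L^2)}\cdot \sum_{(m,n)}|b_{m,n}|$. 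It remains to absorb the leftover $(2\pi L)^{-1}$: one checks by Cauchy--Schwarz that $(2\pi L)^{-1}\sum_{(m,n)}|b_{m,n}| \le C\|w\|_{H^s(T_L^2)}$ with $C$ independent of $L$, precisely because $\sum_{(m,n)}\langle (m,n)/L\rangle^{-2s}$, after rewriting as a sum over the lattice $(\frac1L\Z)^2$ weighted by the cell area $L^{-2}$, is a Riemann sum for $\int_{\R^2}\langle\xi\rangle^{-2s}\,d\xi<\infty$ (here $2s\ge 4>2$ is exactly what is needed for convergence), uniformly in $L\ge$ any fixed positive number. This last step is morally the scale-invariant form of $H^s(\R^2)\hookrightarrow L^\infty(\R^2)$ for $s>1$; indeed $(2\pi L)^{-1}\sum|b_{m,n}| = \|w\|_{\mathcal F L^1}\gtrsim \|w\|_{L^\infty}$ after normalization, and the point is that the embedding constant does not degenerate as $L\to\infty$.

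The main obstacle is precisely this uniformity in $L$: a naive application of the periodic Sobolev embedding on $T_L^2$ gives a constant that blows up (or shrinks) with $L$, so one must instead phrase everything as a Riemann-sum comparison with the corresponding integral over $\R^2$ and check that $L\ge 1$ keeps the mesh of the lattice bounded and hence the sums uniformly comparable to their integral counterparts. Once the bound $\sum_{(m,n)}\langle (m,n)/L\rangle^{-2s} \le C L^2$ is established with $C=C(s)$ independent of $L\ge 1$, the rest is the routine Leibniz/Young manipulation above. I would also remark that the restriction to integer $s\ge 2$ is used only to get the clean subadditivity of $\langle\cdot\rangle^s$ and the convergence of the weight sum ($2s>2$); for fractional $s>1$ the same proof goes through with the standard fractional Leibniz rule, but the integer case suffices for the applications in \eqref{esti1}.
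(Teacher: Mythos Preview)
Your proof is correct and rests on the same key ingredient as the paper's: the uniform bound
\[
\sum_{(m,n)\in\Z^2}\Big(1+\tfrac{m^2}{L^2}+\tfrac{n^2}{L^2}\Big)^{-s}\le C L^2,
\]
obtained by comparison with the integral $\int_{\R^2}\langle\xi\rangle^{-2s}\,d\xi$, which is exactly what makes the embedding $H^s(T_L^2)\hookrightarrow L^\infty(T_L^2)$ (equivalently $H^s\hookrightarrow\mathcal{F}L^1$) hold with an $L$--independent constant. The paper extracts this embedding first and then applies the physical-space Leibniz rule $\partial^\alpha(vw)=\sum\binom{\alpha}{\beta}\partial^\beta v\,\partial^{\alpha-\beta}w$ together with H\"older and an interpolation step to handle the intermediate derivatives; you instead stay on the Fourier side, using the weight subadditivity $\langle\xi+\eta\rangle^s\lesssim\langle\xi\rangle^s+\langle\eta\rangle^s$ and Young's inequality $\|f*g\|_{\ell^2}\le\|f\|_{\ell^2}\|g\|_{\ell^1}$. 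These are two standard and equivalent ways to prove the algebra property; your route has the minor advantage of bypassing the interpolation argument the paper only sketches. You are also right to flag that the Riemann-sum comparison requires $L$ bounded below (say $L\ge 1$), which is the regime actually used in the paper; the statement ``for all $L>0$'' is too generous, since testing on constants shows the inequality fails as $L\to 0$.
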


\begin{proof}
We first prove that for all $\sigma>1$ and $v \in H^{\sigma} (T_L), \|v\|_{L^{\infty} } \leq C \|v\|_{H^{\sigma}}$ for a constant $C>0$ depending only on $\sigma$. Indeed, expanding $v$ in Fourier series, we first get
\[
| v| \leq \frac{1}{2 \pi L} \sum_{(m,n) \in \Z^2} |v_{m,n}|.
\]
We make appear the $H^{\sigma}$-norm of $u$ and use Cauchy-Schwarz inequality to obtain:
\begin{eqnarray}
\|v\|_{L^{\infty}} & \leq& \frac{1}{2 \pi L} \sum_{(m,n) \in \Z^2} |v_{m,n}| \left ( 1 + \frac{m^2}{L^2} + \frac{n^2}{L^2} \right )^{\sigma/2}  \left ( 1 + \frac{m^2}{L^2} + \frac{n^2}{L^2} \right )^{-\sigma /2} \nonumber \\
   & \leq & \frac{1}{2 \pi L} \| v\|_{H^{\sigma}} \left ( \sum_{ (m,n) \in \Z^2} \left ( 1+\frac{m^2}{L^2} + \frac{n^2}{L^2} \right )^{-\sigma} \right )^{1/2}. \label{esti2}
\end{eqnarray}
But we can easily compute the dependence in $L$ of the last sum above by comparing with an integral as follow: 
\begin{eqnarray*}
\sum_{ (m,n) \in \Z^2} \left ( 1+\frac{m^2}{L^2} + \frac{n^2}{L^2} \right )^{-\sigma} & \leq & C \sum_{ (m,n) \in \Z^2} \left ( 1+\frac{m^2}{L^2}  \right )^{-\sigma/2}  \left ( 1+\frac{n^2}{L^2}  \right )^{-\sigma/2} \\
 & \leq & C  \left ( \int \frac{dx}{ \left ( 1+ \frac{x^2}{L^2} \right ) ^{\sigma/2} } \right )^2 \\
 & \leq & C L^2.
\end{eqnarray*}
Thus, there is not more dependence on $L$ in (\ref{esti2}) and we obtain the claim. Now we can prove the lemma. Indeed, we first write the Leibniz rule then use the previous claim and an interpolation argument to get:
\begin{eqnarray*}
\| (- \Delta )^{s/2}  (vw)\| _{L^2}  & \leq & C  \| v\|_{H^s} \| w \|_{H^s} .
\end{eqnarray*}
Here, the constant $C$ contains binomial coefficients and therefore is independent of $L$. Moreover, again with the embedding $H^s \hookrightarrow L^{\infty}$, we have
\[
\|vw\|_{L^2} \leq \|v \|_{L^{\infty}} \|w \|_{L^2} \leq C \| v\|_{H^s} \|w\|_{H^s},
\]
and the last two inequalities end the proof of the lemma.
\end{proof}
Therefore, coming back to (\ref{esti1}), using the boundedness of $E$ in $H^s$ and Lemma~\ref{sobolevembedding}, we have
\[
\| u(t) \|_{H^s} \leq \|u_0\|_{H^s} +C T \| u(t)\|_{H^s}^3,
\]
with $C>0$ independent of the period. This last estimate allows us to perform a Banach fixed point argument (the Lipschitz property is proved with similar arguments) in a ball of the space $\mathcal C( [0,T], H^s)$ of radius $M=2 \|u_0\|_{H^s}$ and with $T =C/ \| u_0\|_{H^s}^2$, and this proves (\ref{rate1}).

\textbf{Low regularity.} This part is the more interesting since, as said above, the explicit blow up solution $v(t)$ of the introduction lives only in $H^s$ with $s<1$. So let $s >1/2$. We define the Bourgain spaces associated with equation (\ref{eq2}) as the completion of the space of smooth compactly supported functions on $\R \times T_L^2$ for the norm defined by:
\be
\| u\|_{X^{s,b}_L} = \left \| \langle i \partial_t + P \rangle  ^{b} \langle (- \lap )^{ \frac{1}{2}} \rangle ^{s} u \right \|_{L^{2}( \R \times T^2_L)},
\ee
where $\langle \alpha \rangle = (1+ \alpha^2)^{1/2}$. Note that there exist more convenient equivalent definitions of this space: we may also check that the norm is equivalent to the following
\be
\| u\|_{X^{s,b}_L} ^2 = \sum_{(m,n) \in \Z ^2} \left (1+\frac{m^2}{L^2}+\frac{n^2}{L^2} \right )^{s} \int_{\R} \langle \tau - \frac{m^2}{L^2} + \frac{n^2}{L^2} \rangle ^{2b} | \widehat{c_{m,n}} (\tau) |^2 d \tau,
\ee
where $\widehat{c_{m,n}}$ is the Fourier transform of $c_{m,n}$. A last definition is possible linking the Bourgain norm with Sobolev norm of the free dynamic:
\begin{equation} \label{definition}
\| u\|_{X^{s,b}_L} = \| e^{-i t P} u(t) \|_{H^b (\R, H^s(T^2_L))}.
\end{equation}
We will work on a finite time interval so that we have to define the localized version of the Bourgain spaces; for $u : [0,T] \times T_L^2 \to \C$:
\begin{equation}  \label{defibourgain}
\| u \|_{X^{s,b}_{L,T}} = \inf \left \{ \| v\|_{X^{s,b}_L}, \  v \in X^{s,b}_L \textrm{ such that } v(t)=u(t) \textrm{ for all } t \in [0,T] \right  \}.
\end{equation}
Let us recall the integral formulation (\ref{intformulation}):
\be
u(t) = e^{it P} u_0 + i \int_0^t e^{ i(t-\tau) P  }\left ( |u(\tau)|^2 u(\tau) + E( | u(\tau)|^2) u(\tau) \right ) d \tau.
\ee
First, the linear term is easily bounded: if $T \leq 1$ and $\psi (t)$ denotes a smooth real cut-off function equal to $1$ on $[0,1]$ and with compact support, we get using the definition of the Bourgain spaces (\ref{definition}):
\begin{equation} \label{linearestimate}
\| e^{i t P} u_0 \|_{X^{s,b}_{L,T}} \leq \| e^{it P} \psi (t) u_0 \|_{X^{s,b}_L} \leq \| \psi (t) u_0 \|_{H^b (\R, H^s(T_L^2))} \leq  C \| u_0 \|_{H^s (T^2_L)} ,
 \end{equation}
where $C=\| \psi \|_{H^b(\R)} $ is independent of the period $L$.
\begin{lem} \label{lemmarien}
 There exists $C>0$ such that for all $L \geq 1$, $T \leq 1$ and all pair $(b,b')$ satisfying $0 < b'<1/2 <b,\  b+b' <1$, 
\be
\left \| \int_0^t e^{i (t-\tau) P} F(\tau) d \tau \right \|_{X^{s,b}_{L,T}} \leq C T^{1-b-b'} \| F\|_{X^{s,-b'}_{L,T}}.
\ee
\end{lem}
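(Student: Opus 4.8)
The plan is to peel off the free evolution $e^{itP}$, which reduces the inequality to a purely one-dimensional (in time) smoothing estimate for the fractional antiderivative; once this is done, the uniformity in $L$ comes for free. First I would use the definition (\ref{defibourgain}) of the localized norm to pick an extension $\widetilde F\in X^{s,-b'}_L$ of $F$ with $\|\widetilde F\|_{X^{s,-b'}_L}\le 2\,\|F\|_{X^{s,-b'}_{L,T}}$, and, with $\psi$ the cut-off used in (\ref{linearestimate}) (so that $\psi(t/T)=1$ on $[0,T]$ since $T\le 1$), I would take
\be
w(t):=\psi\Big(\frac{t}{T}\Big)\int_0^t e^{i(t-\tau)P}\widetilde F(\tau)\,d\tau
\ee
as a competitor in the infimum defining $\|\cdot\|_{X^{s,b}_{L,T}}$: it agrees with the Duhamel term on $[0,T]$, so it suffices to bound $\|w\|_{X^{s,b}_L}$. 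Setting $g(\tau):=e^{-i\tau P}\widetilde F(\tau)$ and using the characterization (\ref{definition}), one has $e^{-itP}w(t)=\psi(t/T)\int_0^t g(\tau)\,d\tau$, whence, with $\mathcal H:=H^s(T^2_L)$,
\be
\|w\|_{X^{s,b}_L}=\Big\|\psi\Big(\frac{t}{T}\Big)\int_0^t g(\tau)\,d\tau\Big\|_{H^b(\R,\mathcal H)},\qquad \|\widetilde F\|_{X^{s,-b'}_L}=\|g\|_{H^{-b'}(\R,\mathcal H)}.
\ee

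The crucial observation is now that in the inequality we must prove, namely $\|\psi(t/T)\int_0^t g\|_{H^b(\R,\mathcal H)}\le C\,T^{1-b-b'}\|g\|_{H^{-b'}(\R,\mathcal H)}$, the torus has disappeared: $\mathcal H$ enters only as an abstract Hilbert space over which everything tensorizes, so it is enough to prove this for scalar-valued $g$, and the resulting constant $C=C(\psi,b,b')$ is then automatically independent of $L$ (and of $s$). This is exactly why the rescaling of $T^2$ into $T^2_L$ is harmless here: conjugating by the free flow hides all the $L$-dependent dispersion inside the inner product of $\mathcal H$, where it never surfaces in the time analysis — only $T\le 1$, never $L$, is used below.

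It remains to establish the scalar estimate, for $0<b'<1/2<b$ with $b+b'<1$ and $0<T\le 1$:
\be
\Big\|\psi\Big(\frac{t}{T}\Big)\int_0^t g(\tau)\,d\tau\Big\|_{H^b(\R)}\le C\,T^{1-b-b'}\,\|g\|_{H^{-b'}(\R)}.
\ee
This is classical (it goes back to Bourgain). I would split $g=g_{\mathrm{lo}}+g_{\mathrm{hi}}$ according to whether the time-frequency $|\tau|$ is $\le 1/T$ or $>1/T$. For $g_{\mathrm{hi}}$: since $\widehat{g_{\mathrm{hi}}}$ vanishes near $\tau=0$, the antiderivative $\int_{-\infty}^{t}g_{\mathrm{hi}}$ has time-Fourier transform $\widehat{g_{\mathrm{hi}}}(\tau)/(i\tau)$, and on $|\tau|>1/T$ one has $|\tau|^{-1}\le T^{1-b-b'}|\tau|^{-b-b'}\lesssim T^{1-b-b'}\langle\tau\rangle^{-b-b'}$ (here $b+b'<1$ and $T\le 1$ are used), which converts the $H^{-b'}$ bound on $g$ into an $H^b$ bound on the primitive with the gain $T^{1-b-b'}$; the constant of integration and the factor $\psi(t/T)$ are then absorbed using $H^b\hookrightarrow L^\infty$ ($b>1/2$) and the standard multiplier estimates for time cut-offs in Bourgain spaces. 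For $g_{\mathrm{lo}}$: I would Taylor-expand $\int_0^t g_{\mathrm{lo}}(\tau)\,d\tau$ in powers of $t$ (equivalently, bound $g_{\mathrm{lo}}$ and its primitive crudely, all time-frequencies being $\lesssim 1/T$), each negative power of $T$ so produced being compensated by the support of $\psi(t/T)$, of size $\sim T$; one again lands on $T^{1-b-b'}$.

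The main obstacle — indeed essentially the only nontrivial point — is the bookkeeping in this scalar step around the cut-off $\psi(t/T)$ and the constant of integration: for $b>1/2$ the naive Leibniz bound on $\|\psi(t/T)h\|_{H^b}$ loses a power of $T^{-1}$, so one must argue by interpolation (say between $L^2$ and $H^1$) or directly on the Fourier side, keeping careful track of how the surplus $1-b-b'>0$ together with $T\le 1$ produces exactly the claimed exponent. Once that scalar lemma is available, lifting it back to $X^{s,b}_L$ on $T^2_L$ is purely formal, and in particular the constant $C$ is independent of $L$, as needed for (\ref{rate1}).
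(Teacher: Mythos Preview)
Your proposal is correct and follows essentially the same route as the paper: reduce, via conjugation by the free flow, to the purely one-dimensional time inequality $\|\psi(t/T)\int_0^t g\|_{H^b(\R)}\le C\,T^{1-b-b'}\|g\|_{H^{-b'}(\R)}$, then lift back, noting that the constant from this scalar step is automatically independent of $L$. The paper phrases the lift as applying the scalar estimate Fourier-mode-by-Fourier-mode and summing (and simply cites \cite{Gin1995} for the scalar inequality), while you tensorize over $\mathcal H=H^s(T^2_L)$ and also sketch the low/high frequency proof of the scalar bound; these are the same argument.
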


\begin{proof}
For a fixed $L>0$, this estimate is classical in the context of the Bourgain spaces. To see that we may choose $C$ independent of $L$, we remark (see \cite{BurGerTzv2005}) that the proof of such an estimate for a fixed $L$ relies on the one dimensional inequality (proved in \cite{Gin1995}):
\begin{equation} \label{pointwise}
\| \phi (\frac{t}{T}) \int_0^t g( \tau) d \tau \| _{H^b(\R)} \leq C T^{1-b-b'} \| g\|_{H^{-b'} (\R)},
\end{equation}
for a cut-off function $\phi$.
Then we apply this estimate pointwise with $g(\tau)=(F(\tau, x), e_{m,n} ) e_{m,n}$, take the square, integrate on $T^2_L$, multiply by $(-m^2+n^2)/L^2$ and sum for $(m,n) \in \Z^2$. We then obtain the desired estimate with the same constant $C$ as in (\ref{pointwise}) thus independent of $L$.
\end{proof}

To treat the nonlinearity in the fixed point argument, we will need the following proposition.
\begin{pro}[\textbf{Trilinear estimate}] \label{trilinear}
There exist a pair $(b,b')$ satisfying $0 < b' < 1/2 < b, b+b' <1$ and a constant $C>0$ such that for every $L \geq 1, T>0$, $u_1, u_2,u_3 \in X^{s,b}_{L,T}$,
\[
 \| u_1 u_2 u_3 \|_{X^{s,-b'}_{L,T}} \leq C \| u_1\|_{X^{s,b}_{L,T}} \| u_2  \|_{X^{s,b}_{L,T}} \| u_3\|_{X^{s,b}_{L,T}},
\]
\[
 \| E(u_1 u_2 )    u_3 \|_{X^{s,-b'}_{L,T}}  \leq C \| u_1\|_{X^{s,b}_{L,T}} \| u_2  \|_{X^{s,b}_{L,T}} \| u_3\|_{X^{s,b}_{L,T}}.
\]
\end{pro}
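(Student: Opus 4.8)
The plan is to reduce the proposition, by duality, to a single bilinear Strichartz estimate for $e^{itP}$ on $T^2_L$ that is \emph{uniform in $L$}. Since the dual of $X^{s,-b'}_L$ is $X^{-s,b'}_L$, it is enough to bound
\[
\left| \int_{\R\times T^2_L} u_1 u_2 u_3\,\overline{w} \right| \qquad\text{and}\qquad \left| \int_{\R\times T^2_L} E(u_1 u_2)\,u_3\,\overline{w} \right|
\]
by $C\,\|u_1\|_{X^{s,b}_L}\|u_2\|_{X^{s,b}_L}\|u_3\|_{X^{s,b}_L}$ whenever $\|w\|_{X^{-s,b'}_L}\le 1$; the localized statement then follows by taking infima over extensions. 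For the $E$-term I would use that $E$ is self-adjoint (its symbol $2m^2/(m^2+n^2)$ is real and even) to move it onto a bilinear factor, writing $\int E(u_1 u_2)\,u_3\,\overline{w}=\int (u_1 u_2)\,E(u_3\overline{w})$, so that $E$ survives only as a Fourier multiplier of $L^2$-norm $\le 2$. An alternative, which is the refinement the introduction alludes to, is to split frequency space into dyadic rectangles $\{\,|m/L|\sim 2^j,\ |n/L|\sim 2^k\,\}$, on which the $0$-homogeneous symbol of $E$ is constant up to a factor $\le 16$, replacing $E$ by a boundedly summable family of $L^2$-isomorphisms with $L$-independent bounds. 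In either case, after the Littlewood--Paley decompositions $u_i=\sum_{N_i}\Delta_{N_i}u_i$ and $w=\sum_{N_4}\Delta_{N_4}w$, both quantities are dominated by sums of terms
\[
\|\Delta_{N_1}u_1\,\Delta_{N_2}u_2\|_{L^2(\R\times T^2_L)}\;\|\Delta_{N_3}u_3\,\Delta_{N_4}w\|_{L^2(\R\times T^2_L)},
\]
with the standard constraint that the two largest of $N_1,\dots,N_4$ are comparable.

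The heart of the matter is then the estimate: for $b>1/2$ there is $C$ so that, for all $L\ge 1$ and dyadic $N_1\ge N_2$,
\[
\|\Delta_{N_1}u\,\Delta_{N_2}v\|_{L^2(\R\times T^2_L)}\le C\,N_2^{\theta}\,\|\Delta_{N_1}u\|_{X^{0,b}_L}\,\|\Delta_{N_2}v\|_{X^{0,b}_L},
\]
with an exponent $\theta$ whose size dictates the regularity threshold. I would prove it via the transfer principle, reducing to the same bound for free solutions; expanding in Fourier series on $T^2_L$ and squaring turns the left side into a count of frequency pairs in $\tfrac1L\Z^2$ lying in the prescribed boxes, with pinned difference and with the dispersion relation $\tau=(-m^2+n^2)/L^2$ pinned modulo a fixed constant. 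Passing to characteristic coordinates, in which $e^{itP}$ acts as a frequency-dependent translation, reduces this to a one-dimensional arc-counting problem, and for $L\ge 1$ an arc of a given length contains at most a constant times that length many points of $\tfrac1L\Z$; no $L$-dependence survives, which is the precise meaning of the dispersive estimates being local in space and time, and here I would rely on the Strichartz theory for $P$ on $T^2$ of \cite{GodTzv2012}, \cite{Wan2012}. To apply this on the pair containing the dual factor $w$, whose time exponent $b'<1/2$ is below threshold, I would keep the joint decomposition $\Delta_{N,R}$ in play and absorb the deficit $\tfrac12-b'$ into the modulation gain $\min(R,R')^{1/2}$ of the bilinear estimate, which is admissible because $b+b'<1$.

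Inserting the bilinear estimate into both factors and attaching the Sobolev weights $\langle N_1\rangle^{s}\langle N_2\rangle^{s}\langle N_3\rangle^{s}\langle N_4\rangle^{-s}$ (the last negative since $w\in X^{-s,b'}_L$), one is left with a geometric-type sum over $N_1,\dots,N_4$ subject to the comparability constraint; the usual high-high versus high-low case analysis makes each case a convergent series, the high-high-to-low cascades being the ones that genuinely consume the gain $N_2^{\theta}$. The series converges for $s>1/2$, which fixes the threshold, while carrying out the modulation sums first with weights $R^{b}$ and $(R')^{-b'}$ pins down the admissible window for $(b,b')$: any $b$ slightly above and $b'$ slightly below $1/2$ with $b+b'<1$ works.

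I expect the bulk of the effort to be the uniform bilinear estimate itself: first that the hyperbolic dispersion relation (lattice points near hyperbolas rather than circles) still yields a usable gain, and second that the constant does not degenerate as $L\to\infty$, together with the verification that the extra localizations forced by the angularly varying symbol of $E$ preserve both the $L$-uniformity and the convergence of the enlarged family of dyadic sums, which is exactly where the homogeneity $0$ and the $L^2$-boundedness of $E$ are used.
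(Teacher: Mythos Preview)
Your general framework is correct and matches the paper's: duality, Littlewood--Paley decomposition, uniform bilinear Strichartz estimates, and dyadic summation. However, there is a genuine gap in the treatment of the $E$-term, and it is precisely the difficulty the paper's introduction flags.

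After duality and Cauchy--Schwarz you are forced into the pairing
\[
\|\Delta_{N_1}u_1\,\Delta_{N_2}u_2\|_{L^2}\;\|\Delta_{N_3}u_3\,\Delta_{N_4}w\|_{L^2},
\]
because $E$ sits on the product $u_1u_2$ and cannot be distributed across a different grouping. Moving $E$ by self-adjointness onto $u_3\overline{w}$ does not change this: you still get $\|u_1u_2\|_{L^2}\|E(u_3\overline{w})\|_{L^2}\le 2\|u_1u_2\|_{L^2}\|u_3\overline{w}\|_{L^2}$, the same forced pairing. Your rectangle decomposition of the symbol of $E$ has the same defect: on each rectangle $E$ becomes a constant times a projector applied to $u_1u_2$ (or, after self-adjointness, to $u_3\overline{w}$), and you are still stuck with the $(1,2)$--$(3,4)$ split. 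Now consider the case $N_1\le N_2\ll N_3\sim N_4$. The bilinear estimate gives $N_1^{1/2+\varepsilon}$ on the first factor but only $\min(N_3,N_4)^{1/2+\varepsilon}\sim N_3^{1/2+\varepsilon}$ on the second. After inserting the Sobolev weights $N_1^{-s}N_2^{-s}N_3^{-s}N_4^{s}$ you are left with a factor $N_3^{1/2+\varepsilon}$ with no compensating negative power, and the sum over $N_3$ diverges. In the pure cubic case one simply re-pairs as $(u_1u_3)(u_2\overline{w})$ to recover $\min(N_1,N_3)\,\min(N_2,N_4)=N_1N_2$; with $E$ present this re-pairing is unavailable.

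The paper's fix, which is what ``more general localizations'' actually refers to, is not a localization tailored to the symbol of $E$ but a further decomposition of the \emph{high-frequency} factors: one partitions $Q_3$ into translated cubes $Q_\alpha$ of side $\sim N_2$, and correspondingly $Q_4$ into the sum-sets $\tilde Q_\alpha$. On each piece the bilinear estimate now returns $N_2^{1/2+\varepsilon}$ instead of $N_3^{1/2+\varepsilon}$, and an almost-orthogonality argument (each lattice point lies in boundedly many $\tilde Q_\alpha$) lets you reassemble via Cauchy--Schwarz in $\alpha$ to recover $\|\Delta_{N_3}u_3\|$ and $\|\Delta_{N_4}w\|$. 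This is why the paper first proves the bilinear Strichartz estimate for \emph{translated} cubes (Lemma~\ref{strichartzuniforme} with arbitrary centers $(a_i,b_i)$), a generality your sketch does not provide. A secondary point: your handling of the sub-threshold exponent $b'$ via a ``modulation gain $\min(R,R')^{1/2}$'' is not how the paper proceeds; it instead interpolates the $L^4$ Strichartz bound against a crude $L^\infty$ estimate to push the modulation exponent $\beta(b)$ strictly below $1-b$, which is what creates room for $b'\in(\beta(b),1-b)$.
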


\begin{proof}
 Let us start with a lemma.

\begin{lem}[\textbf{Uniform periodic bilinear Strichartz estimate}] \label{strichartzuniforme}
There exists $C>0$ such that for every $N_1, N_2 \geq 1$ dyadic numbers, $(a_1,b_1), (a_2,b_2) \in \Z^2$, $L \geq 1$ and $u_1, u_2 \in L^2(T^2_L)$ writing
\be
u_1= \sum_{N_1 \leq \mathrm{Max}\left (\left |\frac{m}{L} -a_1\right |, \left |\frac{n}{L} -b_1\right | \right ) \leq 2 N_1} c_1(m,n) e_{m,n}, \qquad u_2= \sum_{N_2 \leq \mathrm{Max}\left (\left |\frac{m}{L} -a_2\right |, \left |\frac{n}{L} -b_2\right | \right ) \leq 2N_2} c_2(m,n) e_{m,n}
\ee 
we have the bilinear estimate 
\begin{equation} \label{bil}
\| e^{it P} (u_1) e^{it P} (u_2) \|_{L^2([0,1 ]) L^2(T_L^2)} \leq C \mathrm{min}\left (N_1, N_2 \right ) ^{1/2}  \| u_1 \|_{L^2(T_L^2)} \| u_2 \|_{L^2(T_L^2)}.
\end{equation}
\end{lem}

\begin{proof}[Proof of Lemma~\ref{strichartzuniforme}] Note that for $L=1$, linear Strichartz estimates have been proved recently in \cite{Wan2012}, \cite{GodTzv2012}. We first prove the property in the case where $u_1=u_2$ and $a_1=b_1=a_2=b_2=0$. So let $u=u_1=u_2$ and $N=N_1=N_2$. We recall the semiclassical Strichartz estimate on the torus of size $1$ (see \cite{GodTzv2012}): for all $h \in (0,1)$, for all interval $J$ of size $h$ and for all $u_0$ writing
\[
v_0 = \sum_{h^{-1} \leq  \mathrm{Max}( |m|, |n| ) \leq 2 h^{-1}} c(m,n) e_{m,n},
\]
for some coefficient $c(m,n)$, we have 
\begin{equation} \label{str}
\| e^{it P } v_0 \|_{L^4 (J) L^4 (T_1^2)} \leq C \|v_0\|_{L^2(T_1^2)} .
\end{equation}
Similarly to the case where $P$ is the Laplace operator (see \cite{Han2012}), we apply a scaling argument on this estimate to derive a linear Strichartz estimate on $T_L^2$ on the time interval $[0,1]$. Let $u_0 \in L^2(T_L^2)$ localized in frequency in $[0,N]$ i.e.
\begin{equation}
u_0= \sum_{ N \leq \mathrm{Max} \left (\left | \frac{m}{L} \right |, \left | \frac{n}{L} \right | \right )\leq 2N } c(m,n) e_{m,n},
\end{equation}
and $v_0 \in L^2(T_1^2)$ defined by $v_0(x)=u_0(Lx)$. Then computing the $L^4 ([0,1]) L^4 (T_L^2)$ of $\mathrm{exp}(it P) u_0$ in term of $v_0$ and applying a change of variable, we get
\[
\| e^{it P} u_0 \|_{L^4([0,1]) L^4(T_L^2)} = L \| e^{it P} v_0 \|_{ L^4 ([0, L^{-2}]) L^4(T_1^2)}. 
\] 
Remark that $v_0$ writes
\[
v_0 = \sum_{LN \leq \mathrm{Max} \left (|m|, |n|\right  ) \leq 2LN  } c(m,n) e_{m,n},
\]
so that we may apply (\ref{str}) with $h \sim LN$. We need to consider two cases. If $L \geq N$, then $[0, L^{-2}] \subset [0, (LN)^{-1}]$ and so
\[
\| e^{it P} u_0 \|_{L^4([0,1]) L^4} \leq L \| e^{it P} v_0 \|_{L^4([0,(LN)^{-1} ]) L^4}  \leq C L \|v_0\|_{L^2} \leq C \|u_0\|_{L^2} .
\]
If $L <N$, we write $[0, L^{-2}]$ as a union of intervals $[t_k, t_{k+1}]$ with $t_{k+1}- t_k \sim (LN)^{-1}$ and $k \sim N/L$. We apply (\ref{str}) on each $[t_k, t_{k+1}]$ and this gives:
\[
\| e^{it P} u_0 \|_{L^4([0,1]) L^4} \leq C \left ( \frac{N}{L} \right )^{1/4} L \|v_0\|_{L^2} \leq  C \left ( \frac{N}{L} \right )^{1/4} \|u_0\|_{L^2}.
\]
If $L \geq 1$, we may in particular summarize the last two inequalities as 
\[
\| e^{it P} u_0 \|_{L^4([0,1]) L^4} \leq C N^{1/4} \|u_0 \|_{L^2},
\] 
and this proves (\ref{bil}) if $u_1=u_2$ and $a_1=b_1=a_2=b_2=0$. Now we treat the case $u=u_1=u_2$ but without the assumption $a_1=b_1=a_2=b_2=0$. Let $(a,b) \in \Z^2$ and write
\begin{eqnarray*}
u&=&\sum_{ N \leq \mathrm{Max} \left ( \left | \frac{m}{L}-a \right |,   \left | \frac{n}{L}-b \right | \right ) \leq 2N} e^{\frac{it}{L^2}(n^2-m^2)} c(m,n) e_{m,n} \\
  & =& e^{iax} e^{iby} e^{ it (a^2-b^2)} \sum_{N \leq \mathrm{Max} \left ( \left | \frac{p}{L} \right |,   \left | \frac{q}{L} \right | \right ) \leq 2N } c(aL+p,bL+q) e^{\frac{-it}{L^2} (p^2-q^2+2aLp-2bLq)} e_{p,q}.
\end{eqnarray*}
Then
\begin{eqnarray*}
\| u\|_{L^4 L^4}^4 &=& \| \sum_{N \leq \mathrm{Max} \left ( \left | \frac{p}{L} \right |,   \left | \frac{q}{L} \right | \right ) \leq 2N}  \frac{1}{2 \pi L} c(aL+p, bL+q) e^{\frac{-it}{L^2}(p^2-q^2)} e^{ \frac{ip}{L} (x-2at)}  e^{ \frac{iq}{L} (y+2bt)} \|_{L^4 L^4 }^4  \\
                   & =& \int_t \int_{\substack{  -2at  \leq \alpha \leq 2 \pi L -2at  \\ 2bt \leq \beta \leq 2 \pi L + 2bt   }} \left | \sum_{N \leq \mathrm{Max} \left ( \left | \frac{p}{L} \right |,   \left | \frac{q}{L} \right | \right ) \leq 2N} e^{\frac{-it}{L^2}(p^2-q^2)} \frac{1}{ 2 \pi L} c(aL+p,bL+q) e^{\frac{i}{L} p \alpha } e^{\frac{i}{L} q\beta } \right | ^4 d \alpha d \beta dt \\
                   & =&  \int_t \int_{\substack{   0 \leq \alpha \leq 2 \pi L  \\ 0\leq \beta \leq 2 \pi L   }} \left | \sum_{N \leq \mathrm{Max} \left ( \left | \frac{p}{L} \right |,   \left | \frac{q}{L} \right | \right ) \leq 2N} e^{\frac{it}{L^2}(q^2-p^2)} c(aL+p,bL+q) e_{p,q}(\alpha, \beta) \right | ^4 d \alpha d \beta dt .
\end{eqnarray*}
We apply the linear result proved above with $(a,b)=(0,0)$ and this gives
\begin{eqnarray*}
\| u\|_{L^4 L^4}^4 & \leq &CN\left (\sum_{k,l} | c(k,  l) |^2  \right ) ^2 \\
 & \leq  & C N  \|u_0\|_{L^2}^4.
\end{eqnarray*}
This proves the result when $u_1=u_2$. Note that if we assume another type of localization for $u$
\[
 u= \sum_{ \mathrm{Max}( | \frac{m}{L} -a|, | \frac{n}{L}- b | ) \leq N } c(m,n) e_{m,n},
\]
the $L^4L^4$ estimate still holds. It may be seen by remarking that estimate (\ref{str}) also holds if $u_0$ is spectrally localized in $\{ (m,n) \in \Z^2,   \mathrm{Max}(|m|, |n| ) \leq 2 h^{-1} \}$ (see \cite{GodTzv2012}) and using the same analysis as above. Now, we can prove the bilinear estimate in the general case. We assume for instance $N_1 \leq N_2$ and decompose the set $A= \{ (m,n) \in \Z^2, N_2 \leq \mathrm{Max} (| a_2-m/L | ,| b_2-n/L|) \leq 2N_2 \} $ in small disjoint cubes of the form $ Q_{\alpha}=Q_{(k,l)} = \{ (m,n) \in A,\mathrm{Max} (| k-m/L | ,| l -n/L|) \leq N_1  \} $ for $\alpha =(k,l)$ running over a set $I$. Then for different $\alpha's$, the functions $e^{it P}(u_0) e^{it P} (\Delta_{Q_{\alpha}}  v_0)$ are almost orthogonal since each function is localized in Fourier in the set $D_{\alpha}:= \{ (m,n) \in \Z^2, N_1 \leq \mathrm{Max}( |m/L -a_1|, |n/L -b_1 | ) \leq 2N_1 \} + Q_{\alpha}$ and the sets $D_{\alpha}$ are almost disjoint in the sense that each point of $\Z^2$ belongs to a 
finite number of sets $D_{\alpha}$. Indeed, if $(m,n) \in D_{\alpha_1} \cap D_{\alpha_2}$, then in particular we may write
\[
 (m,n)=c+d=e+f,
\]
with $d \in Q_{\alpha_1}$, $f \in Q_{\alpha_2}$, and $c, e \in \{ (m,n) \in \Z^2, N_1 \leq \mathrm{Max}( |m/L -a_1|, |n/L -b_1 | ) \leq 2N_1 \} $. We deduce $|c_1-e_1| =|f_1-d_1| \leq 4 N_1L$. But each $Q_{\alpha}$ is of size less than $4 N_1L$ and there is a finite number of $Q_{\alpha}$ whose distance to $Q_{\alpha_2}$ is less than $4 N_1 L$. So if we fix $\alpha_1$, then $\alpha _2$ runs in a finite number of indexes. Thus, this orthogonality property implies 
\begin{eqnarray*}
 \| e^{it P}(u_1) e^{it P}(u_2) \|_{ L^2 L^2}^2  & \leq & C \sum_{ \alpha \in I} \| e^{it P}(u_1) e^{it P}(\Delta_{Q_{\alpha}} u_2) \|_{ L^2 L^2}^2 \\
     & \leq & C \| e^{it P}(u_1) \|_{L^4 L^4 }^2  \sum_{ \alpha \in I}  \| e^{it P}(\Delta_{Q_{\alpha}}  u_2) \|_{L^4 L^4 }^2 \\
     & \leq & C N_1  ^{1/2} \| u_1 \|_{L^2} ^2 N_1  ^{1/2}  \sum_{ \alpha \in I} \| \Delta_{Q_{\alpha}} u_2 \|_{L^2}^2 \\
     & \leq & C N_1  \| u_1 \|_{L^2} ^2 \| u_2 \|_{L^2}^2.
\end{eqnarray*}
This proves the proposition.
\end{proof}

\begin{rem}
Note that if $Q_i$ denotes the set 
\[
Q_i=\left \{ (m,n) \in \Z^2, N_i \leq \mathrm{Max} \left ( \left | \frac{m}{L} -a_i \right |,  \left | \frac{n}{L} -b_i \right | \right )  \leq 2N_i \right \},
\]
then $(N_iL)^2 \leq |Q_i|  $ and we may rewrite the Strichartz estimate (\ref{bil}) as
\begin{equation} \label{cube0}
\| \Delta_{Q_1} (e^{it P}u ) \Delta_{Q_2} (e^{it P}v ) \|_{L^2 L^2} \leq C \left ( \frac{ \mathrm{min} (|Q_1|, |Q_2 |)}{L^2} \right ) ^{\frac{1}{4}}\| u\|_{L^2} \| v\|_{L^2} .
\end{equation}
Once we have proved (\ref{cube0}), from covering arguments, we may deduce the same estimate for other shapes of $Q_i$ typically $Q_i= \{ (m,n) \in \Z^2, \mathrm{Max}( |m/L| , |n/L|) \leq 2N_i  \}$ or translated sets of the previous one. In the sequel, we will use (\ref{cube0}) for these kinds of $Q_i$. More precisely, we have the following.
\end{rem}

\begin{lem} \label{cube}
 For all $b>1/2$, there exist $C(b)>0$, $\beta(b) \in (0, 1-b)$ and $\var(b)>0$ such that for all dyadic square $Q_1, Q_2 \subset \Z ^2$, $R_1, R_2$ dyadic number, $L \geq 1$ and $u_0, v_0 \in L^2( \R, L^2(T^2_L))$,
\begin{multline} \label{proj}
\| \Delta_{Q_1,R_1} u_0 \   \Delta_{Q_2,R_2} v_0 \|_{L^2 L^2} \leq C(b)  \left ( \frac{\mathrm{Min}(|Q_1|, |Q_2|)}{L^2} \right )^{1/4+ \var(b)} (R_1 R_2)^{\beta(b)}  \\ \times \| \Delta_{Q_1,R_1} u_0 \|_{L^2 L^2} \| \Delta_{Q_2,R_2} v_0 \|_{L^2 L^2} ,
\end{multline}
where $|Q_i|$ denotes the number of points in $Q_i$. Moreover, we may choose $\var (b)$ such that $\var(b)$ goes to $0$ as $b$ goes to $1/2$.
\end{lem}

\begin{proof}[Proof of Lemma~\ref{cube}]
As for the proof of (\ref{bil}), we first assume $u=u_0=v_0$. Next, from (\ref{bil}), we get for all $b >1/2$ and $f \in X^{0,b}_L$ localized in frequency in $Q$,
\[
\| f\|_{L^4 L^4} \leq C \left ( \frac{|Q|}{L^2} \right )^{1/8} \| f\|_{X^{0,b}_L}.
\] 
Again the constant $C$ does not depend on $L$ since the proof (see \cite{BurGerTzv2005}) relies only on manipulations in time. In particular, for all $u$,
\beq \label{interpolation}
\| \Delta_{Q,R}  u \|_{L^4 L^4} \leq C \left ( \frac{|Q|}{L^2} \right )^{1/8} \| \Delta _{Q,R}  u  \|_{X^{0,b}_L} ,
\eeq
for all $b > 1/2$. And this gives using properties of the Bourgain spaces
\begin{eqnarray} 
  \| \Delta_{Q,R}  u \|_{L^4 L^4} \leq  C \left ( \frac{|Q|}{L^2} \right )^{1/8} R^{b} \| \Delta _{Q,R} u \|_{L^2 L^2}. \label{estimate}
\end{eqnarray}
The fact that $b > 1/2$ in the above estimate will not be enough to conclude so that we need to refine this $L^4L^4$ estimate. To do so, we compute the $L^{\infty} L^{\infty}$ norm of $\Delta_{Q,R} u $. From the definition of the projection $\Delta _{Q,R}$, we get using twice Cauchy-Schwarz inequality
\begin{eqnarray}
\| \Delta_{Q,R}u \|_{L^{\infty} L^{\infty}} & \leq & \frac{1}{L} \sum_{(m,n) \in Q} \int_{ R \leq \langle \tau - \frac{m^2}{L^2} + \frac{n^2}{L^2} \rangle \leq 2R } |\widehat{c_{m,n}}(\tau)|   d \tau  \nonumber \\
 & \leq & \frac{R^{1/2}}{L} \sum_{(m,n) \in Q} \left ( \int_{ R \leq \langle \tau - \frac{m^2}{L^2} + \frac{n^2}{L^2} \rangle \leq 2R} | \widehat{c_{m,n}}(\tau) | ^2 d \tau \right )^{1/2} \nonumber \\
  & \leq & R^{1/2} \left ( \frac{ |Q|}{L^2} \right )^{1/2} \left ( \sum_{(m,n) \in Q} \int_{ R \leq \langle \tau - \frac{m^2}{L^2} + \frac{n^2}{L^2} \rangle \leq 2R} | \widehat{c_{m,n}}(\tau) |^2 d \tau \right )^{1/2} \nonumber \\
  & \leq & \left ( \frac{ |Q|}{L^2} \right )^{1/2} R^{1/2} \| \Delta_{Q,R} u \|_{L^2 L^2} . \label{etoile}
\end{eqnarray}
 By interpolation between the trivial inequality $ \| \Delta_{Q,R} u\|_{L^2 L^2} \leq \| \Delta_{Q,R} u \|_{L^2 L^2}$ and (\ref{etoile}), we have
\begin{equation} \label{interpolg}
\| \Delta_{Q,R} u \|_{L^4 L^4} \leq \left ( \frac{|Q|}{L^2} \right )^{1/4} R^{1/4} \| \Delta _{Q,R} u \|_{L^2L^2}.
\end{equation}
Let $\var(b)>0$ such that $\delta(b) :=b(1-8 \var(b)) +8 \var(b)\frac{1}{4} \in (0, 1-b)$ and $\var(b) \to 0$ as $b \to 1/2$. For instance choose $\delta(b)=3/2-2b$. Next, by interpolation between (\ref{estimate}) with weight $1- 8 \var(b)$ and (\ref{interpolg}) with weight $8 \var(b)$, we get the expected estimate: 
\begin{equation} \label{bil12}
\| \Delta_{Q,R} u_0  \|_{L^4 L^4} \leq C \left (  \frac{|Q|}{L^2} \right )^{1/8+ \var(b)} R^{\delta (b)} \| \Delta_{Q,R}  u_0 \|_{L^2L^2} .
\end{equation}
To deduce (\ref{proj}) from (\ref{bil12}), we proceed as in the proof of Strichartz estimate (\ref{bil}): if for instance $|Q_1| < |Q_2|$ then we decompose $Q_2$ in pieces of size $|Q_1|$ and next apply an almost orthogonality argument. We omit this argument and the proof is over.
\end{proof}

To prove Lemma~\ref{trilinear}, it is enough to prove the trilinear estimate for the global space $X^{s,b}_L$ i.e. $T = \infty$, then we recover the local in time estimate by taking the infimum on all extensions of $u_1, u_2, u_3 \in X^{s,b}_{L,T}$. Moreover, we only prove the second estimate; the first one is easier. By a duality argument, we have to show the quadrilinear estimate: there exists $C>0$ such that for all $L \geq 1$, $u_1, u_2,u_3, u_4 \in X^{s,b}_L$:
\be
\left | \int_{ \R \times T^2_L} E(u_1 u_2) u_3 u_4 \right | \leq  C \| u_1\|_{X^{s,b}_L} \| u_2  \|_{X^{s,b}_L} \| u_3\|_{X^{s,b}_L} \| u_4\|_{X^{-s,b'}_L}.
\ee 
In the sequel, we will note $Q_i=\{ (m,n) \in \Z^2, N_i \leq \mathrm{Max} (|m/L| ,|n/L|) < 2 N_i \} $. Decomposing each $u_i$ as
\[
u_i= \sum_{N_i, R_i} \Delta_{N_i, R_i} (u_i),
\]
we have that 
\[
G=\int_{ \R \times T^2_L} E(u_1 u_2) u_3 u_4
\]
becomes
\[
 G=\int_{\R \times T^2_L} \sum_{\substack{ N_1,N_2,N_3,N_4 \\ R_1,R_2,R_3,R_4}} E\left(\Delta_{N_1,R_1}(u_1)\Delta_{N_2,R_2}(u_2)\right) \Delta_{N_3,R_3}(u_3) \Delta_{N_4,R_4}(u_4).
\]
In the summation above, we may restrict indexes to $N_4  \leq 2 (N_1 +N_2+N_3)$. Indeed, the function 
\[
U=E\left(\Delta_{N_1,R_1}(u_1)\Delta_{N_2,R_2}(u_2)\right) \Delta_{N_3,R_3}(u_3)
\]
is localized in Fourier in the set $\{ (m,n) \in \Z^2, m=m_1+m_2+m_3, n=n_1+n_2+n_3 , (m_1,n_1) \in Q_1, (m_2,n_2) \in Q_2, (m_3,n_3) \in Q_3 \}$. Thus, if $N_4  > 2(N_1 +N_2+N_3)$, the integral over $T_L^2$ of $U\Delta_{N_4, R_4} (u_4)$ is zero. Therefore
\begin{equation} \label{summation}
 G= \sum_{\substack{ N_4 \leq 2(N_1+N_2+N_3) \\ R_1,R_2,R_3,R_4}} \alpha (N_1,N_2,N_3,N_4, R_1, R_2, R_3, R_4),
\end{equation}
where 
\[
 \alpha (N_1,N_2,N_3,N_4, R_1, R_2, R_3, R_4) = \int_{\R \times T^2_L} E\left(\Delta_{N_1,R_1}(u_1)\Delta_{N_2,R_2}(u_2)\right) \Delta_{N_3,R_3}(u_3) \Delta_{N_4,R_4}(u_4).
\]
Contrary to the case of a typical cubic nonlinearity, $\alpha$ is not symmetric in $N_1, N_2, N_3, N_4$ and we need to split the analysis in several cases. The worst situation is when the two lowest frequencies appear in the nonlocal term. Let us first treat this case.

 \medskip

\textbf{Case} {\mathversion{bold} $N_3= \mathrm{max}(N_1,N_2,N_3).$} Without loss of generality, we may assume $N_1 \leq N_2 \leq N_3$. In this situation, we decompose the set $Q_3$ in small pieces of size $N_2 L$. Hence, we may write $Q_3$ as a disjoint union of sets of the form $Q_{\alpha}=Q_{(a,b)} = \{(m,n) \in Q_3, \mathrm{Max} ( |a-m/L|, |b-n/L | ) \leq N_2  \}$ for some well chosen set $I$ of pairs $\alpha = (a,b) \in Q_3$ so that the union is disjoint. Using again an orthogonality argument, $\alpha$ is then
\[
 \alpha(N_i, R_i) =  \int_{\R \times T^2_L}  E\left(\Delta_{N_1,R_1}(u_1)\Delta_{N_2,R_2}(u_2)\right) \Delta _{Q_{\alpha},R_3}(u_3)  \Delta_{\tilde{Q}_{\alpha}, R_4} (u_4) 
\]
where 
\[
\tilde{Q}_{\alpha}=\{ (m_4,n_4) \in Q_4, m=-m_1-m_2-m_3, n=-n_1-n_2-n_3, (m_i,n_i) \in Q_i, i=1,2, (m_3,n_3) \in Q_{\alpha}\}.
\]
From Cauchy-Schwarz inequality in space and time and the boundedness of $E$ on $L^2(T_L^2)$, 
\[
 | \alpha(N_i, R_i) | \leq \| \Delta_{N_1,R_1}(u_1)\Delta_{N_2,R_2}(u_2)\|_{L^2 L^2} \| \Delta _{Q_{\alpha},R_3}(u_3)  \Delta_{\tilde{Q}_{\alpha}, R_4} (u_4) \|_{L^2 L^2}.
\]
Note that since $| Q_{\alpha } | \leq (N_2L)^2$, we deduce by the triangle inequality that we also have $| \tilde{Q}_{\alpha} | \leq C (LN_2)^2$ and thus we can apply Lemma~\ref{cube} to get 
\begin{multline}
 \alpha(N_i, R_i)   \leq  C  N_1^{\frac{1}{2}+  \var(b)} N_2^{\frac{1}{2}+ \var(b)} ( R_1 R_2 R_3 R_4)^{\beta (b)}    \| \Delta_{N_1, R_1} (u_1) \| _{L^2 L^2} \\ \times \| \Delta_{N_2, R_2} (u_1) \| _{L^2 L^2} \sum_{ \alpha \in I}  \| \Delta_{Q_{\alpha}, R_3}(u_3)  \|_{L^2 L^2} \| \Delta_{\tilde{Q}_{\alpha},R_4 } (u_4) \|_{L^2 L^2}.
\end{multline}
Next from Cauchy-Schwarz inequality, we may write
\[
 \sum_{ \alpha \in I}  \| \Delta_{Q_{\alpha}, R_3}(u_3)  \|_{L^2 L^2} \| \Delta_{\tilde{Q}_{\alpha},R_0 } (u_0) \|_{L^2 L^2} \leq \left ( \sum_{ \alpha \in I}  \| \Delta_{Q_{\alpha}, R_3}(u_3)  \|_{L^2 L^2}^2 \right ) ^{\frac{1}{2}} \left (  \sum_{ \alpha \in I} \| \Delta_{\tilde{Q}_{\alpha},R_4 } (u_4) \|_{L^2 L^2}^2 \right ) ^{\frac{1}{2}}.
\]
First, since $(Q_{\alpha})_{\alpha}$ is a partition of $Q_3$,  by orthogonality, we have for the first term on the right hand side above:
\[
\left (  \sum_{ \alpha \in I}  \| \Delta_{Q_{\alpha}, R_3}(u_3)  \|_{L^2 L^2}^2 \right )^{ 1/2}  = \| \Delta_{Q_3, R_3} (u_3) \|_{L^2 L^2}.
\]
For the second term, the $\tilde{Q}_{\alpha} $'s recover $Q_4$ but since there are not disjoint, strict orthogonality is broken. However, using the same argument of almost orthogonality as for the proof of Strichartz estimate (each point of $Q_4$ belongs to a finite number of $\tilde{Q}_{\alpha} $), we deduce
\[
 \left (  \sum_{ \alpha \in I}  \| \Delta_{\tilde{Q}_{\alpha}, R_4}(u_4)  \|_{L^2 L^2}^2 \right )^{ 1/2} \leq C \| \Delta_{Q_4, R_4} (u_0) \|_{L^2 L^2}.
\]
Thus,
\begin{equation} \label{bourg}
\alpha(N_i, R_i) \leq C (N_1 N_2)^{1/2+ \var(b)} (R_1 R_2 R_3 R_4)^{ \beta (b)} \prod_{i=0}^3 \| \Delta_{Q_i, R_i} (u_i) \|_{L^2 L^2}. 
\end{equation}
We reorder terms to make appear Bourgain's norms of $u_i$. The quantity 
\[
 H= \sum_{\substack{ N_4 \leq 2(N_1+N_2+N_3) \\ R_1, R_2, R_3, R_4 \\ N_3 = \mathrm{Max}(N_1, N_2, N_3)}}  \alpha(N_i, R_i)
\]
is bounded by
\begin{eqnarray*}
| H| & \leq  &\sum_{N_1, R_1} N_1 ^{\frac{1}{2} + \var(b) -s} R_1^{\beta(b)-b} N_1^s R_1^b \|\Delta_{N_1, R_1} (u_1) \|_{L^2 L^2} \\
 & & \times \sum_{N_2, R_2} N_2 ^{\frac{1}{2} + \var(b) -s} R_2^{\beta(b)-b} N_1^s R_1^b \|\Delta_{N_2, R_2} (u_2) \|_{L^2 L^2} \\
 & & \times \sum_{N_4 \leq 6 N_3} \sum_{R_4, R_3} R_0^{\beta(b)-b'} R_4^{b'} R_3^{\beta (b)-b} R_3^b \| \Delta_{N_4, R_4} (u_4)\|_{L^2 L^2} \| \Delta_{N_3, R_3} (u_3)\|_{L^2 L^2}.
\end{eqnarray*}
For the first two sums above, we use Cauchy-Schwarz inequality to recover Bourgain's norm of $u_i$. For instance for the first term, we have if $s>1/2 + \var(b)$, and since $b>\beta(b)$,
\begin{eqnarray*}
\sum_{N_1, R_1} N_1 ^{\frac{1}{2} + \var(b) -s} R_1^{\beta(b)-b} N_1^s R_1^b \|\Delta_{N_1, R_1} (u_1) \|_{L^2 L^2} & \leq & \| u_1 \|_{X^{s,b}_L} \left ( \sum_{N_1, R_1} N_1 ^{1 + 2\var(b) -2s} R_1^{ 2 (\beta(b)-b)} \right )^{ \frac{1}{2}}  \\
   & \leq & C  \| u_1 \|_{X^{s,b}_L}.
\end{eqnarray*}
For the third sum, using again Cauchy-Schwarz inequality, and choosing $b' > \beta(b)$ (this condition is compatible with $1-b-b'>0$ since $\beta(b)<1-b$), we write 
\begin{eqnarray*}
\sum_{R_4} R_4^{ \beta(b)- b'}  R_4^{b'} \| \Delta_{N_4, R_4} (u_4) \|_{L^2 L^2} & \leq & \left ( \sum_{R_4} R_4^{2 \beta(b)-2b'} \right )^{\frac{1}{2}} \left ( \sum_{R_4} R_4^{2b'} \| \Delta_{N_4, R_4} (u_4) \|_{L^2 L^2} ^2 \right )^{\frac{1}{2}} \\ & \leq & C \| \Delta_{N_4} (u_4) \|_{X^{0,b'}_L}.
\end{eqnarray*}
We treat the sum over $R_3$ in the same way. Therefore, 
\[
| H |  \leq   \|u_1\|_{X^{s,b}_L} \|u_2\|_{X^{s,b}_L} \sum_{N_4 \leq 6 N_3} \frac{N_4^s}{N_3 ^s} N_4^{-s} \| \Delta_{N_4} (u_4) \|_{X^{0,b'}_L} N_3^s \| \Delta_{N_3} (u_3) \|_{X^{0,b}_L} .
\]
Now we need the following lemma (see \cite{BurGerTzv2005a} lemma~4.5 for a proof) to conclude.
\begin{lem}
For every $s>0$, there exists a constant $C>0$ such that for all sequence $(a_{N_4})_{N_4 \in 2^{\N}}, (b_{N_3})_{N_3 \in 2^{\N}}$, we have
\[
\sum_{N_4 \leq 6 N_3}  \left ( \frac{N_4}{N_3} \right )^{s} | a_{N_4} b_{N_3}|  \leq C \left ( \sum_{N_4} a_{N_4}^2 \right )^{1/2} \left ( \sum_{N_3} a_{N_3}^2 \right )^{1/2}.
\]
\end{lem}
\noindent To conclude in this case, we apply the lemma with 
\[
a_{N_4} =N_4^{-s} \| \Delta_{N_4} (u_4) \|_{X^{0,b'}_L}  , \quad b_{N_3}=N_3^s \| \Delta_{N_3} (u_3) \|_{X^{0,b}_L},
\] 
and obtain
\begin{equation} \label{bourgprop2}
 |H| \leq C \| u_1\|_{X^{s,b}_L} \| u_2  \|_{X^{s,b}_L} \| u_3\|_{X^{s,b}_L} \| u_4\|_{X^{-s,b'}_L}.
\end{equation}
\textbf{Case} \mathversion{bold} $N_3 < \mathrm{max}(N_1,N_2,N_3).$ \mathversion{normal} In the summation (\ref{summation}), we assume for instance $N_1 \leq N_3 \leq N_2$. This case is easier since we do not need to decompose high frequencies. With the definition of $\alpha(N_i, R_i)$ and from Cauchy-Schwarz inequality:
\[
 |\alpha(N_i, R_i)| \leq \| \Delta_{N_1, R_1}(u_1) \Delta_{N_2, R_2}(u_2) \|_{L^2 L^2} \| \Delta_{N_3, R_3}(u_3) \Delta_{N_4, R_4}(u_4) \|_{L^2 L^2}. 
\]
Coming back to Lemma~\ref{cube}, we have directly 
\[
\alpha(N_i, R_i) \leq (N_1 N_3)^{1/2 + \var} (R_4 R_1 R_2R_3)^{\beta(\var)} \prod_{i=1}^4 \| \Delta_{N_i, R_i}(u_i) \|_{L^2 L^2}.
\]
Once we have this estimate, the end of the proof in this case is the same as the previous one and we obtain
\begin{equation} \label{maxprop}
\sum_{ \substack{ N_3 < \mathrm{max}(N_1,N_2,N_3) \\ R_1, R_2, R_3, R_4 \\ N_4 \leq 6 \mathrm{max}(N_1,N_2,N_3) }} \alpha (N_i, R_i)  \leq C \| u_1\|_{X^{s,b}_L} \| u_2  \|_{X^{s,b}_L} \| u_3\|_{X^{s,b}_L} \| u_4\|_{X^{-s,b'}_L}.
\end{equation} 
Estimates (\ref{bourgprop2}) and (\ref{maxprop}) provides Proposition~\ref{trilinear}.
\end{proof}
Writing 
\[
\Phi(u)=e^{it P} u_0 + i \int_0^t e^{ i(t-\tau) P  }\left ( |u(\tau)|^2 u(\tau) + E( | u(\tau)|^2) u(\tau) \right ) d \tau.
\]
and using (\ref{linearestimate}), Lemma~\ref{lemmarien} and Proposition~\ref{trilinear}, we have easily
\[
 \| \Phi (u)  \|_{X^{s,b}_{L,T}} \leq C \|u_0\|_{H^s} + C T^{1-b-b'} \|u \|_{X^{s,b}_{L,T}}^3,
\]
and
\[
\| \Phi(u) - \Phi (v) \|_{X^{s,b}_{L,T}} \leq C T^{1-b-b'} \left (\| u\|_{X^{s,b}_{L,T}} ^2 + \| v\|_{X^{s,b}_{L,T}} ^2\right ) \| u-v \|_{X^{s,b}_{L,T}}.
\]
Therefore, we may close the fixed point argument in the ball $B(0,R)$ of $X^{s,b}_{L,T}$ with $R=2C \|u_0\|_{H^s}$ and $T \geq D/\|u_0\|_{H^s}^{2/(1-b-b')}$ with $D>0$ independent of the period $L \geq 1$. This proves (\ref{rate1}) for low regularities and also the first point (take $L=1$) in Theorem~\ref{newthm}.

\medskip

\textbf{Step 2.} Let us now finish the proof of the lower bound (\ref{lower_bound}). Let $u \in H^s(T^2)$ solution to (\ref{ds2}) and consider the family for $\tau \in [0, T)$:
\[
 v^{\tau}(t,x,y)= \lambda (\tau) u(\lambda ^2(\tau) t+ \tau, \lambda (\tau) x, \lambda (\tau) y), 
\]
where $\lambda (\tau)= \| u(\tau) \|_{H^s(T^2)}^{-1/s}$. For all $\tau$, $v^{\tau}$ is a function on the torus $T_{1/ \lambda (\tau)}$ and satifies the equation (\ref{eq2}) for $L=1/ \lambda (\tau)$. Moreover, it is easy to check that $\|v^{\tau}(0)\|_{L^2}=\|u(0) \|_{L^2}$ and $\| (- \Delta)^{s/2} (v^{\tau}(0))\|_{L^2} \leq 1$. If we denote by $T_{\tau}$ the maximal time for $v^{\tau}$, from  (\ref{rate1}), we deduce the uniform bound, $T_{\tau} \geq C >0$. But $T_{\tau} =(T-\tau)/\lambda ^2(\tau)$ where $T$ is the maximal time for $u$ and this with the uniform lower bound on $T_{\tau}$ proves the lower bound (\ref{lower_bound}). 

\medskip

\noindent \textbf{Acknowledgements.} I would like to thank Nikolay Tzvetkov for his guidance and helpful advices. I also thank one of the referees for the remarks which improved the presentation of the paper.

\nocite{GerPie2010}

\bibliographystyle{plain}
\bibliography{bibliography-dstorus2}

\end{document}